\theoremstyle{plain}
\newtheorem{theorem}{Theorem}
\newtheorem{lemma}[theorem]{Lemma}
\newtheorem{corollary}[theorem]{Corollary}
\numberwithin{theorem}{section}
\numberwithin{equation}{theorem}
\theoremstyle{definition}
\newtheorem{example}[theorem]{Example}
\newtheorem*{question*}{Question}
\DeclareMathOperator{\Aut}{Aut}
\DeclareMathOperator{\gr}{gr}
\begin{document}

\title{An isomorphism lemma for graded rings}

\author{Jason Bell and James J. Zhang}

\address{Bell: Department of  Pure Mathematics,
University of Waterloo, Waterloo, ON N2L 3G1,
Canada}

\email{jpbell@uwaterloo.ca}

\address{Zhang: Department of Mathematics, Box 354350,
University of Washington, Seattle, Washington 98195, USA}
\email{zhang@math.washington.edu}

\begin{abstract}
Let $A$ and $B$ be two connected graded algebras finitely
generated in degree one. If $A$ is isomorphic to $B$ as 
ungraded algebras, then they are also isomorphic to each 
other as graded algebras.
\end{abstract}

\subjclass[2000]{Primary 16W20, 16W50}


\keywords{Isomorphism problem,
skew polynomial ring}


\maketitle

\section*{Introduction}
\label{xxsec0}

The isomorphism problem has been studied by several researchers in recent years
\cite{BJ, CPWZ, Ga}. In this paper we prove a result which is useful for some 
of the isomorphism problems concerning graded algebras.

Throughout this paper, we let $k$ be a base field, and all vector spaces, algebras, and morphisms are 
over $k$. 

\begin{theorem}
\label{xxthm1}
Let $A$ and $B$ be two connected graded algebras finitely
generated in degree 1. If $A\cong B$ as 
ungraded algebras, then $A\cong B$ as graded algebras.
\end{theorem}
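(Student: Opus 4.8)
The plan is to reconstruct each algebra from the adic filtration of its augmentation ideal and then transport that filtration across the given ungraded isomorphism. Write $\mathfrak m_A=A_{\ge 1}$ and $\mathfrak m_B=B_{\ge 1}$. The starting point is a rigidity observation: because $A$ is generated in degree $1$ one has $\mathfrak m_A^{\,n}=A_{\ge n}$, so that the augmentation ideal recovers $A$,
\[
\gr_{\mathfrak m_A}A=\bigoplus_{n\ge 0}A_{\ge n}/A_{\ge n+1}\;\cong\;\bigoplus_{n\ge 0}A_n=A
\]
as \emph{graded} algebras, and similarly $\gr_{\mathfrak m_B}B\cong B$. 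Given an ungraded isomorphism $\phi\colon A\to B$, I would set $I=\phi^{-1}(\mathfrak m_B)$, a two–sided ideal with $A/I\cong k$ and $\phi(I^{\,n})=\mathfrak m_B^{\,n}$ for all $n$. Then $\phi$ induces a graded isomorphism $\gr_I A\cong\gr_{\mathfrak m_B}B\cong B$, and the whole theorem collapses to the internal claim that $\gr_I A\cong\gr_{\mathfrak m_A}A$ as graded algebras: replacing the augmentation filtration by the $I$-adic filtration must not change the associated graded ring.

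To analyze $\gr_I A$ concretely, choose a basis $x_1,\dots,x_d$ of $A_1$, let $c_i\in k$ be the scalar by which $x_i$ acts on $A/I\cong k$, and note that $I$ is generated by the shifted elements $y_i=x_i-c_i$, so $\gr_I A$ is the ``tangent cone'' of $A$ at the corresponding character. Lifting to the free algebra $T$ on $A_1$, with $A=T/\langle R\rangle$ and $R$ homogeneous, the automorphism $\sigma$ of $T$ defined by $x_i\mapsto x_i+c_i$ carries the $I$-adic filtration to the augmentation filtration, and hence presents $\gr_I A$ via the $\mathfrak m$-adic initial ideal of $\langle\sigma(R)\rangle$. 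Because passing to initial forms preserves the dimension of every finite-dimensional truncation while the already-homogeneous ideal $\langle R\rangle$ is its own initial ideal, a degreewise dimension count shows that the graded augmentation maximizes the function $n\mapsto\dim_k A/J^{\,n}$ over all codimension-one two–sided ideals $J$; in particular $\dim_k A/I^{\,n}\le\dim_k A/\mathfrak m_A^{\,n}$. Reading the left side through $\phi$ as $\dim_k B/\mathfrak m_B^{\,n}$, and running the identical argument with $\phi^{-1}$ and the ideal $\phi(\mathfrak m_A)\subseteq B$, the two inequalities combine to force $\dim_k A/\mathfrak m_A^{\,n}=\dim_k B/\mathfrak m_B^{\,n}$ for every $n$; thus $A$ and $B$ have the same Hilbert series.

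The main obstacle is the final upgrade from equal Hilbert series to an honest graded isomorphism. The naive attempt—send $x_i$ to the degree-one symbol $\overline{y_i}\in(\gr_I A)_1$—need not respect the relations $R$, and in fact the tangent cone need not be a graded quotient of $A$ at all (for the quadric cone $T/(xy-z^2)$ the tangent cone at a smooth point is a polynomial ring, which admits no graded surjection from the cone). Hence the isomorphy of $\phi$, and not merely the existence of the character, must be used essentially. The route I would pursue is to transport the grading of $B$ back along $\phi$ to an exhaustive algebra filtration $E_\bullet$ on $A$ with $E_n=\phi^{-1}(B_{\le n})$; this satisfies $E_1^{\,n}=E_n$ and $\gr^E\!A\cong B$, it is equivalent to the grading filtration $A_{\le\bullet}$ since $A$ is generated in degree one, and the Hilbert-series equality now gives $\dim_k E_n=\dim_k A_{\le n}$ for all $n$. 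The crux is then to compare the two degree-one generated filtrations $E_\bullet$ and $A_{\le\bullet}$ closely enough—playing $\phi$ and $\phi^{-1}$ against each other through the dimension counts above—to conclude that their associated graded algebras $B$ and $A$ coincide. I expect this comparison, rather than any of the preceding reductions, to be where the real work lies.
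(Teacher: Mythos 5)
Your reductions are sound as far as they go, and the middle part of your argument is genuinely nice: the observation that for any codimension-one two-sided ideal $J$ of $A$ one has $\dim_k A/J^{\,n}\le\dim_k A/\mathfrak m_A^{\,n}$ (via the injection $p\mapsto$ truncation of $\sigma(p)$ from the homogeneous relation ideal into the initial ideal), played against $\phi$ and $\phi^{-1}$, gives $\dim_k A_n=\dim_k B_n$ for all $n$ in one stroke. That is exactly ``Claim 2'' of the paper's proof, which the paper only obtains degree by degree inside an induction, so your route to the Hilbert-series equality is a genuinely different and arguably cleaner argument for that piece.

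However, the proof has a genuine gap, and you have located it yourself: everything after ``The main obstacle'' is a statement of the remaining problem, not an argument. Equality of Hilbert series is far from sufficient --- there are many pairs of non-isomorphic connected graded algebras generated in degree one with identical Hilbert series --- so the whole content of the theorem is the step you defer. What is missing is a mechanism for transporting \emph{relations}: writing $\phi(x_i)=y_i+y_i'$ with $y_i'\in L^2$ for $i<d$ and $\phi(x_d)=\alpha+y_d+y_d'$, one must show that every homogeneous relation $r(x_1,\dots,x_d)=0$ in $A$ forces $r(y_1,\dots,y_d)=0$ in $B$. The paper's key device, absent from your proposal, is the iterated commutator $x_i^{[j]}=[x_d,[x_d,\dots,[x_d,x_i]\cdots]]$: bracketing kills the additive constant $\alpha$, so $\phi(x_i^{[j]})-y_i^{[j]}\in L^{j+1}$ and relations among the $x_i^{[j]}$ pass to the $y_i^{[j]}$ immediately. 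An arbitrary homogeneous relation is then decomposed as $r=\sum_s x_d^s r_s(x_i^{[j]})$, and an induction on degree (interleaving the relation-transport claim with the dimension equality, and extracting the top coefficient $\alpha^t r_t(y_i^{[j]})\in L^{m-t+1}$ to derive a contradiction) closes the argument. Your filtration-comparison framework does not by itself produce this; without some analogue of the commutator trick, the comparison of $E_\bullet$ with $A_{\le\bullet}$ that you propose has no purchase on the relations, and the proof does not go through.
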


Equivalently, we have

\begin{corollary}
\label{xxcor2} Suppose that an algebra $A$ has two graded algebra
decompositions
$$A=\bigoplus_{i=0}^{\infty} A_i=\bigoplus_{i=0}^{\infty} B_i$$
such that
\begin{enumerate}
\item[(1)]
$A_0=B_0=k$,
\item[(2)]
$A$ is generated by $A_1$ {\rm{(}}respectively, by $B_1${\rm{)}}, and 
\item[(3)]
either $A_1$ or $B_1$ is finite dimensional over $k$.
\end{enumerate}
Then there is an algebra automorphism $\phi: A\to A$ such that
$\phi(A_i)=B_i$ for all $i$.
\end{corollary}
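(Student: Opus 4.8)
The plan is to prove the Corollary, which is the form in which the two gradings live on a single algebra; the Theorem then follows by transporting the grading of $B$ to $A$ along the given ungraded isomorphism. So I assume $A=\bigoplus_i A_i=\bigoplus_i B_i$ with $A_0=B_0=k$, each grading generated in degree $1$, and (say) $A_1$ finite dimensional. The first reduction is to observe that it suffices to produce an algebra automorphism $\phi$ of $A$ with $\phi(A_1)=B_1$: since $A_i=A_1^{\,i}$ and $B_i=B_1^{\,i}$ for $i\ge 1$, multiplicativity of $\phi$ then forces $\phi(A_i)=\phi(A_1)^i=B_1^i=B_i$ for all $i$. I also record that, $A_1$ being a finite-dimensional subspace of $\bigoplus_i A_i$, one has $A_1\subseteq\bigoplus_{i\le d}A_i$ for some $d$, and symmetrically for $B_1$; thus the automorphism I am after has corrections of bounded degree.

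The core of the argument is the finite-dimensional case, which I would settle first. When $\dim_k A<\infty$, both augmentation ideals $\mathfrak{m}=\bigoplus_{i\ge 1}A_i$ and $\mathfrak{n}=\bigoplus_{i\ge 1}B_i$ are nilpotent two-sided ideals with $A/\mathfrak{m}=A/\mathfrak{n}=k$, so each equals the Jacobson radical $J=J(A)$. Hence $J^n=A_{\ge n}=B_{\ge n}$ for all $n$: the two gradings are two \emph{splittings of the same radical filtration}, and in particular $\dim_k A_i=\dim_k J^i/J^{i+1}=\dim_k B_i$. Now define $\phi$ on each $A_i$ by letting $\phi(a)$ be the unique element of $B_i$ with $\phi(a)\equiv a\pmod{J^{i+1}}$ (this exists and is unique because $B_i\to J^i/J^{i+1}$ is an isomorphism), and extend linearly. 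By construction $\phi$ is strictly filtered and induces the identity on $\gr_J A$, so it is bijective; and multiplicativity is \emph{automatic}: for $a\in A_i$ and $a'\in A_j$ both $\phi(a)\phi(a')$ and $\phi(aa')$ lie in $B_{i+j}$ and are congruent to $aa'$ modulo $J^{i+j+1}$, so they coincide by uniqueness. Thus $\phi$ is an automorphism with $\phi(A_i)=B_i$, as wanted.

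For the general case I would reduce to the finite-dimensional engine through the truncations $A/\mathfrak{m}^{N}=\bigoplus_{i<N}A_i$ (finite dimensional since $A_1$ is), straightening degree by degree as above and using the bound $B_1\subseteq A_{\le d}$ to keep all corrections within a fixed range of degrees, so that the resulting maps on the finite-dimensional space $A_1$ stabilize as $N\to\infty$ to a genuine (bounded-degree) endomorphism of $A$, which one then checks is an automorphism carrying $A_1$ onto $B_1$.

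\textbf{The main obstacle} is precisely this passage from finite to infinite dimension. The clean finite-dimensional proof rests on $\mathfrak{m}=\mathfrak{n}=J$, but in general the two augmentation filtrations are genuinely \emph{transverse}: already for $A=k[x]$ with $\mathfrak{m}=(x)$ and $\mathfrak{n}=(x-c)$ they are incomparable, the only two-sided ideals homogeneous for both gradings are $0$ and $A$, and the sought automorphism $x\mapsto x-c$ is continuous for neither adic topology (it moves the augmentation, i.e.\ it ``translates''). Consequently there is no common finite-dimensional graded quotient to which both gradings descend, and the straightening automorphism does \emph{not} live in the $\mathfrak{m}$-adic completion. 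The crux is therefore to show that finiteness --- the bound $B_1\subseteq A_{\le d}$ together with generation in degree $1$ --- forces the approximating straightenings to terminate in a bounded-degree automorphism of $A$ itself rather than a mere automorphism of a completion; this is where I expect the real work, and presumably the role of hypothesis (3), to lie.
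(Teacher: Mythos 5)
Your finite-dimensional argument is correct and clean: when $\dim_k A<\infty$ both augmentation ideals coincide with the Jacobson radical, the two gradings are splittings of the same radical filtration, and the straightening map is automatically multiplicative. But this is exactly the ``easy case'' of the problem, and the corollary as stated covers $k[x]$, skew polynomial rings, and every other infinite-dimensional example of interest. You have honestly flagged the remaining step --- the passage to infinite dimensions, where the two augmentation ideals $\mathfrak{m}=A_{\ge 1}$ and $\mathfrak{n}=B_{\ge 1}$ may be transverse --- as the place where you ``expect the real work to lie,'' and the proposal contains no argument that closes it. Your proposed route through the truncations $A/\mathfrak{m}^N$ cannot work as described, for the reason you yourself identify: when $\mathfrak{n}\neq\mathfrak{m}$ the $B$-grading does not descend to $A/\mathfrak{m}^N$, there is no common finite-dimensional graded quotient, and the straightening automorphism (e.g.\ $x\mapsto x-c$ on $k[x]$) does not live in the $\mathfrak{m}$-adic completion. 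So the proof is genuinely incomplete.

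The paper's device for this hard case (phrased for an isomorphism $\phi$ with $\phi(A_{\ge 1})\neq B_{\ge 1}$, so that after normalization $\phi(x_d)=\alpha+y_d+(\text{higher order})$ with $\alpha\neq 0$ and $\phi(x_i)=y_i+(\text{higher order})$ for $i<d$) is to replace the generators $x_1,\dots,x_{d-1}$ by the iterated commutators $x_i^{[j]}=[x_d,[x_d,\dots,[x_d,x_i]\cdots]]$: commutation kills the constant term $\alpha$, so $\phi(x_i^{[j]})\equiv y_i^{[j]}\pmod{L^{j+1}}$ with $L=B_{\ge 1}$, even though $\phi$ does not respect the augmentation. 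Writing an arbitrary homogeneous relation as $r=\sum_s x_d^s\, r_s(x_i^{[j]})$, the paper then runs a simultaneous induction on degree on two claims --- that every degree-$n$ relation among the $x_i$ maps to a relation among the $y_i$, and that $\dim A_n=\dim B_n$ --- extracting a contradiction from the top nonvanishing term $r_t$ unless $r(y_1,\dots,y_d)=0$; the map $x_i\mapsto y_i$ is then the desired graded isomorphism. This commutator trick, together with the double induction, is the missing idea; without it (or a substitute) your argument establishes the corollary only when the two augmentation ideals happen to coincide, in particular when $A$ is finite dimensional.
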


Let $\Aut(A)$ (respectively, $\Aut_{gr}(A)$) be the group of algebra 
automorphisms (respectively, graded algebra automorphisms) of $A$.
We have an immediate consequence.

\begin{corollary}
\label{xxcor3}
Retain the hypotheses of Corollary {\rm{\ref{xxcor2}}}. If
$\Aut(A)=\Aut_{gr}(A)$, then $A_i=B_i$ for all $i$.
\end{corollary}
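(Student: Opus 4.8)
The plan is to combine Corollary~\ref{xxcor2} with the hypothesis that every algebra automorphism of $A$ is graded; almost all of the real work is already contained in Corollary~\ref{xxcor2}. First I would invoke that corollary: since the hypotheses (1)--(3) are in force, there exists an algebra automorphism $\phi\colon A\to A$ with $\phi(A_i)=B_i$ for every $i$. This is the substantive input, and it supplies the comparison map between the two gradings.

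Next I would unwind what the assumption $\Aut(A)=\Aut_{gr}(A)$ provides. Here $\Aut_{gr}(A)$ is understood with respect to the grading $A=\bigoplus_i A_i$, so that by definition a member of $\Aut_{gr}(A)$ is a degree-preserving automorphism, i.e.\ one carrying each $A_i$ onto $A_i$. Since $\phi$ is an algebra automorphism, we have $\phi\in\Aut(A)=\Aut_{gr}(A)$, and therefore $\phi(A_i)=A_i$ for all $i$. Comparing this with the identity $\phi(A_i)=B_i$ coming from Corollary~\ref{xxcor2} yields $A_i=\phi(A_i)=B_i$ for every $i$, which is exactly the claim.

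The only point that needs care is the bookkeeping of which grading $\Aut_{gr}$ refers to, and this is harmless: if instead one interprets $\Aut_{gr}(A)$ relative to the $B$-grading, the same argument applied to $\phi^{-1}\in\Aut(A)=\Aut_{gr}(A)$ (using $\phi^{-1}(B_i)=A_i$ and $\phi^{-1}(B_i)=B_i$) gives the identical conclusion. Consequently I do not expect a genuine obstacle here: once Corollary~\ref{xxcor2} is available, the statement is a one-line synthesis of the existence of the grading-matching automorphism $\phi$ with the hypothesis that $\phi$ must in fact be degree-preserving.
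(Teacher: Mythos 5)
Your argument is correct and is exactly the intended one: the paper omits the proof as easy, and your synthesis (take the $\phi$ supplied by Corollary~\ref{xxcor2}, note $\phi\in\Aut(A)=\Aut_{gr}(A)$ forces $\phi(A_i)=A_i$, and compare with $\phi(A_i)=B_i$) is the natural one-line deduction. Your remark about which grading $\Aut_{gr}(A)$ refers to is a sensible precaution and handled correctly.
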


As an application, we give the following solution of an isomorphism problem.

Let $\{p_{ij} \mid i<j\}$ be a set of invertible scalars 
in $k^\times:=k\setminus \{0\}$. By convention, let $p_{ii}=1$ and $p_{ji}=
p^{-1}_{ij}$ if $i<j$.  Recall that the skew polynomial ring  $k_{p_{ij}}[x_i,\ldots,x_n]$ 
is generated by $x_1,\ldots,x_n$ and subject to the relations 
$$x_j x_i=p_{ij} x_i x_j$$
for all $i,j$.  An elementary change of generators means that we replace the 
ordered set $\{x_1,\ldots,x_n\}$ by $\{a_1 x_{\sigma(1)}, \ldots,
a_n x_{\sigma(n)}\}$ where $a_i\in k^\times$ and $\sigma$ is a permutation
in $S_n$. The following result is viewed as a partial generalization of 
\cite[Theorem 7.4]{Ga}.

\begin{theorem}
\label{xxthm0.4}
Suppose that $p_{ij}\neq 1$ for all $i\neq j$. Let $A$ be a graded ring
$k_{p_{ij}}[x_1,\ldots,x_n]/M$ where $M$ is an ideal 
in $k_{p_{ij}}[x_1,\ldots,x_n]_{\geq 3}$ and $B$ be a graded ring
$k_{p'_{ij}}[x_1,\ldots,x_m]/N$ where $N$ is an ideal 
in $k_{p'_{ij}}[x_1,\ldots,x_m]_{\geq 2}$. 
If $A$ is isomorphic to $B$ as ungraded algebras, then
$n=m$ and there is a permutation $\sigma\in S_n$ such that $p'_{ij}
=p_{\sigma(i)\sigma(j)}$ for all $i,j$. Furthermore, after an elementary
change of generators in $A$, $A=B$. 
\end{theorem}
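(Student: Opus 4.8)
The plan is to reduce the statement to a linear-algebra computation in the degree-two part, after first upgrading the ungraded isomorphism to a graded one. Since $A$ and $B$ are connected graded and finitely generated in degree $1$, Theorem~\ref{xxthm1} promotes the ungraded isomorphism $A\cong B$ to a graded isomorphism $\phi\colon A\to B$. A graded isomorphism carries $A_d$ isomorphically onto $B_d$, so $\dim_k A_d=\dim_k B_d$ for every $d$. Write $S=k_{p_{ij}}[x_1,\dots,x_n]$ and $S'=k_{p'_{ij}}[y_1,\dots,y_m]$ for the two skew polynomial rings, with images $\bar x_i,\bar y_l$ in $A,B$. Because $M\subseteq S_{\ge 3}$ we have $A_1=S_1$ and $A_2=S_2$, while $N\subseteq S'_{\ge 2}$ gives $B_1=S'_1$. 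Comparing degree $1$ forces $n=\dim_k A_1=\dim_k B_1=m$. Comparing degree $2$ then gives $\dim_k B_2=\dim_k A_2=\binom{n+1}{2}=\dim_k S'_2$, so the degree-two part $N_2$ of $N$ vanishes; hence in $B_2$ the only relations are the skew relations $\bar y_m\bar y_l=p'_{lm}\bar y_l\bar y_m$ for $l<m$, and $\{\bar y_l\bar y_m : l\le m\}$ is a basis of $B_2$.

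Next I would analyze $f:=\phi|_{A_1}\colon A_1\to B_1$, recorded as an invertible matrix $C=(c_{il})$ via $f(\bar x_i)=\sum_l c_{il}\bar y_l$. Since $\phi$ is an algebra map, each defining relation $\bar x_j\bar x_i=p_{ij}\bar x_i\bar x_j$ of $A$ (for $i<j$) must map to zero in $B_2$. Expanding $\phi(\bar x_j)\phi(\bar x_i)-p_{ij}\phi(\bar x_i)\phi(\bar x_j)$ in the basis $\{\bar y_a\bar y_b:a\le b\}$ and reading off the coefficient of the diagonal term $\bar y_a\bar y_a$ yields $c_{ia}c_{ja}(1-p_{ij})=0$ for all $a$ and all $i<j$. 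This is the crucial point at which the hypothesis $p_{ij}\neq 1$ enters: it forces $c_{ia}c_{ja}=0$ whenever $i\neq j$, that is, each column of $C$ has at most one nonzero entry. An invertible matrix with this property is necessarily monomial, so there are a permutation $\sigma\in S_n$ and scalars $a_i\in k^\times$ with $f(\bar x_i)=a_i\,\bar y_{\sigma(i)}$.

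Finally, feeding the monomial form of $C$ back into the off-diagonal coefficients (those of $\bar y_a\bar y_b$ with $a<b$) forces $p'_{\sigma(i)\sigma(j)}=p_{ij}$ for all $i,j$, which is the asserted relation up to replacing $\sigma$ by $\sigma^{-1}$ to match the stated indexing. The assignment $x_i\mapsto a_i\,x_{\sigma(i)}$ is exactly an elementary change of generators, and the matching of the $p$'s shows that it defines a graded isomorphism $S\to S'$ of skew polynomial rings lifting $\phi$; since $\phi$ carries $M$ onto $N$, performing this change of generators identifies the presentation of $A$ with that of $B$, giving $A=B$. I expect the monomiality argument of the second paragraph to be the main obstacle, together with the bookkeeping needed to justify that $N$ contributes nothing in degree two, since both are precisely what make the coefficient extraction legitimate.
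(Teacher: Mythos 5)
Your proof is correct, and the second half takes a genuinely different route from the paper's. The paper, after invoking Theorem \ref{xxthm1} to get a graded isomorphism (which it takes in the direction $\phi\colon B\to A$, so that the permutation comes out with the indexing as stated), argues via normal elements: each $x_i\in B_1$ is normal, $\phi$ preserves normality, and Lemma \ref{xxlem2.1} --- which rests on the citation to \cite[Lemma 3.5(d)]{KKZ} and is where the hypotheses $p_{ij}\neq 1$ and $M\subseteq S_{\geq 3}$ enter --- says every normal element of $A_1$ is a scalar multiple of some $x_j$, forcing $\phi(x_i)=c_ix_{\sigma(i)}$. You instead never mention normality: you push the quadratic defining relations of $A$ through $\phi$, use the dimension count $\dim B_2=\dim A_2=\binom{n+1}{2}$ to see $N_2=0$ (so that coefficient extraction in the basis $\{\bar y_a\bar y_b: a\le b\}$ is legitimate), and read off the diagonal coefficients $c_{ia}c_{ja}(1-p_{ij})=0$ to force the matrix of $\phi|_{A_1}$ to be monomial. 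This is where $p_{ij}\neq 1$ and $M\subseteq S_{\geq 3}$ enter for you (the latter via $M_2=0$). What your approach buys is self-containedness --- it replaces the external reference to \cite{KKZ} with a short explicit computation, and it makes transparent exactly which degrees of $M$ and $N$ matter --- at the cost of some bookkeeping with the basis of $B_2$. The off-diagonal comparison giving $p'_{\sigma(i)\sigma(j)}=p_{ij}$ (equivalently the stated form after replacing $\sigma$ by $\sigma^{-1}$, or by running your argument on $\phi^{-1}$ as the paper does) and the final identification $A=B$ after the elementary change of generators are handled the same way in both arguments.
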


\section{Proof of Theorem \ref{xxthm1}}
\label{xxsec1}

First we introduce an invariant of an algebra, which 
is similar to the Jacobson radical. Let $\dim$ denote the $k$-vector
space dimension. Let $s$ be a positive integer 
and define the Jacobson radical with tangent dimension $s$ to be
$$J_s(A)=\bigcap \{ I\mid I\subseteq A {\text{ such that }} 
\dim A/I=1 ~{\text{and}}\;  \dim I/I^2=s\}.$$
(In the case of an empty intersection, we take $J_s(A)=A$.)
An ideal $I$ of $A$ with the property
$\dim A/I=1, {\text{and}}\;  \dim I/I^2=s$ is called a codimension
1 ideal of tangent dimension $s$.
In general, given a sequence of non-negative integers
$(s_i)_{i\geq 0}$, we can define
$$J_{(s_i)}(A)=\bigcap \{ I\mid I\subseteq A, \dim I^i/I^{i+1}=s_i, {\text{for all
$i$}}\}.$$

The following lemma is easy.

\begin{lemma}
\label{xxlem1.1}
Let $A$ be an algebra finitely generated over $k$.
\begin{enumerate}
\item[(1)]
Suppose $A$ is generated by $d$ elements. 
If $s>d$, then $J_s(A)=A$.
\item[(2)]
Suppose $A$ is connected graded and $\dim A_{\geq 1}/(A_{\geq 1})^2=d$
{\rm{(}}this implies that $A$ is generated by $d$ elements{\rm{)}}.
Then $J_d(A)\subseteq A_{\geq 1}$.
\item[(3)]
Suppose that $k$ is infinite.
Then $J_{s+1}(A[t])= (J_s(A))[t]$.
\end{enumerate}
\end{lemma}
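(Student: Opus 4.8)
The plan is to treat the three parts in order, using throughout that a codimension-one ideal $I$ is exactly the kernel of an augmentation $\epsilon\colon A\to k$, since $A/I\cong k$ as algebras. For part (1), if $a_1,\dots,a_d$ generate $A$, then $I=\ker\epsilon$ is generated as a two-sided ideal by the $d$ elements $m_i:=a_i-\epsilon(a_i)$: writing an arbitrary element of $I$ as a noncommutative polynomial in the $a_i=m_i+\epsilon(a_i)$ and discarding the (vanishing) constant term exhibits it as a sum of products lying in the ideal generated by the $m_i$. Reducing such a product modulo $I^2$ replaces every extra factor by its scalar augmentation, so $I/I^2$ is spanned by the images of the $m_i$ and hence $\dim I/I^2\le d$. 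Therefore when $s>d$ there is no codimension-one ideal of tangent dimension $s$, the defining intersection is empty, and $J_s(A)=A$ by convention.

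Part (2) is then immediate: since $A_0=k$, the augmentation ideal $A_{\geq 1}$ is itself a codimension-one ideal, and by hypothesis its tangent dimension $\dim A_{\geq 1}/(A_{\geq 1})^2$ equals $d$. Thus $A_{\geq 1}$ is one of the ideals appearing in the intersection defining $J_d(A)$, whence $J_d(A)\subseteq A_{\geq 1}$.

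Part (3) is the heart of the lemma and the place where infiniteness of $k$ is used. Every codimension-one ideal of $A[t]$ is the kernel of an augmentation $\epsilon\colon A[t]\to k$, which is determined by its restriction $\epsilon_0:=\epsilon|_A$ (with kernel a codimension-one ideal $I_0\subseteq A$) together with the scalar $\lambda:=\epsilon(t)$; the resulting ideal is $I_\lambda=I_0A[t]+(t-\lambda)A[t]$. First I would pin down its tangent dimension. After the central change of variable $u=t-\lambda$ one has $I_\lambda=I_0[u]+uA[u]$, and since $u$ is central, $I_\lambda^2=I_0^2[u]+uI_0[u]+u^2A[u]$. Comparing coefficients of the powers of $u$ then gives a vector-space isomorphism $I_\lambda/I_\lambda^2\cong (I_0/I_0^2)\oplus (A/I_0)$, so that $\dim I_\lambda/I_\lambda^2=\dim I_0/I_0^2+1$. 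Consequently the codimension-one ideals of $A[t]$ of tangent dimension $s+1$ are precisely the $I_\lambda$ with $I_0$ of tangent dimension $s$ and $\lambda\in k$ arbitrary.

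It then remains to compute the intersection, and this is where I expect the only real subtlety. For a fixed $I_0$, an element $f=\sum_n a_nt^n$ lies in $I_\lambda$ exactly when $\sum_n\epsilon_0(a_n)\lambda^n=0$. Because $k$ is infinite, a polynomial in $\lambda$ that vanishes for every $\lambda\in k$ is identically zero, forcing $\epsilon_0(a_n)=0$ for all $n$; hence $\bigcap_{\lambda\in k}I_\lambda=I_0[t]$. This step is the main obstacle in the sense that it is exactly the point where the hypothesis is indispensable, the statement being false over finite fields. Intersecting next over all admissible $I_0$ yields $J_{s+1}(A[t])=\bigcap_{I_0}I_0[t]=\big(\bigcap_{I_0}I_0\big)[t]=(J_s(A))[t]$, the middle equality holding since $\sum_n a_nt^n$ lies in every $I_0[t]$ iff each $a_n$ lies in every $I_0$. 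Finally, the empty-intersection convention is consistent on both sides: if $A$ has no codimension-one ideal of tangent dimension $s$, then by the correspondence $A[t]$ has none of tangent dimension $s+1$, and both sides equal $A[t]$.
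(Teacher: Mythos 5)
Your proposal is correct and follows essentially the same route as the paper: part (1) via the generators $a_i-\epsilon(a_i)$ spanning $I/I^2$, and part (3) via the correspondence $J\leftrightarrow(I_0,\lambda)$ with $\bigcap_{\lambda}I_\lambda=I_0[t]$ forced by the infinitude of $k$. Your write-up merely makes explicit (via augmentations and the coefficientwise computation of $I_\lambda/I_\lambda^2$) some steps the paper leaves to the reader.
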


\begin{proof} (1) Let $\{f_1,\ldots,f_d\}$ be a set of
algebra generators of $A$. If $I$ is an ideal of codimension 1,
then $I$ is generated by $\{x_{i}\}_{i=1}^d$ where 
$x_i:=f_i-\alpha_i$ for some $\alpha_i\in k$. Then $A$
is generated by $\{x_i\}_{i=1}^d$ as an algebra and
$I^2$ is generated by $\{x_ix_j\}_{i,j=1}^d$. Hence
$\dim I/I^2 \leq d$. Thus there is no ideal $I$ of
codimension $1$ such that $\dim I/I^2=s>d$. The
assertion follows.

(2) Clear.

(3) Let $I$ be an ideal of $A$ of codimension 1 such that
$\dim I/I^2=s$. Let $I_{\alpha}$ be the ideal of $A[t]$
generated by $I$ and $t-\alpha$. Then $I_{\alpha}$
is of codimension 1 such that $\dim I_{\alpha}/I^2_{\alpha}
=s+1$. Using the fact that $k$ is infinite, it is easy to check that
\begin{equation}
\label{E1.1.1}\tag{E1.1.1}
\bigcap_{\alpha\in k} I_{\alpha}=I[t].
\end{equation}
Let $J$ be any idea of $A[t]$ of codimension 1 such that
$\dim J/J^2=s+1$. Let $I=J\cap A$. Then $I$ is an ideal 
of $A$ of codimension 1. There is always an $\alpha\in k$
such that $t-\alpha\in J$. Then $J$ is generated by $I$
and $t-\alpha$. So $J=I_{\alpha}$. Thus $\dim J/J^2=
\dim I/I^2 +1$. So $\dim I/I^2=s$. 
Now taking the intersection with all such $I$, equation 
\eqref{E1.1.1} implies that $J_{s+1}(A[t])= J_s(A)[t]$. 
\end{proof}

The next lemma is a special case of Theorem \ref{xxthm1}.
We prove it first as a warm-up.

\begin{lemma}
\label{xxlem1.2}
Let $A$ and $B$ be connected graded algebras generated in degree
1. Suppose that $d=\dim A_1<\infty$ and $A_{\geq 1}$ is the unique
codimension 1 ideal with tangent dimension $d$. If $\phi: A\to B$ 
is an isomorphism as ungraded algebras, then 
$A\cong B$ as graded algebras.
\end{lemma}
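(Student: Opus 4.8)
The plan is to exploit the fact that the invariant $J_s$ is intrinsic to the ungraded algebra structure and to use the uniqueness hypothesis to force $\phi$ to respect the augmentation ideals. First I would observe that $J_d(A)$ is defined entirely in terms of ideals $I\subseteq A$ and the numerical data $\dim A/I$ and $\dim I/I^2$, all of which are preserved by any algebra isomorphism: if $\phi\colon A\to B$ is an isomorphism and $I$ is a codimension $1$ ideal of tangent dimension $d$, then so is $\phi(I)$, since $\phi$ induces isomorphisms $A/I\cong B/\phi(I)$ and $I/I^2\cong \phi(I)/\phi(I)^2$. Hence $\phi$ carries the collection of codimension $1$ ideals of tangent dimension $d$ in $A$ bijectively onto the corresponding collection in $B$, so $\phi(J_d(A))=J_d(B)$. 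By the uniqueness hypothesis, $A_{\geq 1}$ is the \emph{only} codimension $1$ ideal of $A$ with tangent dimension $d$, whence $J_d(A)=A_{\geq 1}$ and therefore $\phi(A_{\geq 1})=J_d(B)$.

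Next I would show $\dim B_1=d$ so that Lemma~\ref{xxlem1.1} can be brought to bear on $B$. The ideal $\phi(A_{\geq 1})$ is a codimension $1$ ideal of $B$ of tangent dimension $d$, and $B$ is generated in degree $1$ by $\dim B_1$ elements, so Lemma~\ref{xxlem1.1}(1) gives $d\leq \dim B_1$. Symmetrically, $\phi^{-1}(B_{\geq 1})$ is a codimension $1$ ideal of $A$ of tangent dimension $\dim B_1$, and $A$ is generated by $d$ elements, so $\dim B_1\leq d$. Thus $\dim B_1=d$, and since $\dim B_{\geq 1}/(B_{\geq 1})^2=\dim B_1=d$, Lemma~\ref{xxlem1.1}(2) yields $J_d(B)\subseteq B_{\geq 1}$.

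Combining the two displays gives $\phi(A_{\geq 1})=J_d(B)\subseteq B_{\geq 1}$. Here is the step I expect to be the main (if mild) obstacle: upgrading this containment to an equality. Since $\phi(A_{\geq 1})$ and $B_{\geq 1}$ are both codimension $1$ ideals of $B$, the quotient $B_{\geq 1}/\phi(A_{\geq 1})$ has dimension $\dim B/\phi(A_{\geq 1})-\dim B/B_{\geq 1}=1-1=0$, so in fact $\phi(A_{\geq 1})=B_{\geq 1}$. The crux of the whole argument is really this: recognizing that the uniqueness hypothesis forces $\phi$ to send the augmentation ideal of $A$ exactly onto that of $B$.

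Finally I would pass from this filtered statement to a graded isomorphism. Because $A$ is generated in degree $1$ one has $(A_{\geq 1})^n=A_{\geq n}$ for all $n$, and likewise $(B_{\geq 1})^n=B_{\geq n}$. Applying $\phi$ to the $n$-th power of $A_{\geq 1}$ therefore shows $\phi(A_{\geq n})=B_{\geq n}$ for every $n$, so $\phi$ is an isomorphism of the $A_{\geq 1}$-adic and $B_{\geq 1}$-adic filtered algebras. Passing to associated graded algebras, the $A_{\geq 1}$-adic associated graded of $A$ is $\bigoplus_n A_{\geq n}/A_{\geq n+1}=\bigoplus_n A_n=A$ itself, and similarly for $B$, so the induced map $\gr\phi$ is a graded algebra isomorphism $A\to B$, as desired.
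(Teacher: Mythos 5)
Your proposal is correct and follows essentially the same route as the paper: identify $A_{\geq 1}$ and $B_{\geq 1}$ via the isomorphism-invariant data (codimension $1$, tangent dimension $d$, uniqueness), conclude $\phi(A_{\geq 1})=B_{\geq 1}$, and pass to the adic associated graded algebras, which coincide with $A$ and $B$ since both are generated in degree $1$. You merely spell out more explicitly the two steps the paper leaves terse, namely the two-sided application of Lemma~\ref{xxlem1.1}(1) to get $\dim B_1=d$ and the upgrade of the containment $\phi(A_{\geq 1})\subseteq B_{\geq 1}$ to an equality.
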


\begin{proof} By Lemma \ref{xxlem1.1}(1,2),
$\dim A_1=\dim B_1$. Since $A\cong B$, by hypothesis, there is
a unique ideal of $B$ of codimension 1 such that the tangent dimension
is $d$. Therefore $B_{\geq 1}$ is the
unique codimension 1 ideal of tangent dimension $d$.
Thus $\phi$ maps $I:=A_{\geq 1}$ to $L:=B_{\geq 1}$.
Therefore $\phi$ induces a graded algebra isomorphism
from 
$\gr_I A:=\bigoplus_{i=0}^{\infty} I^i/I^{j+1}$ to 
$\gr_L B:=\bigoplus_{i=0}^{\infty} L^i/L^{j+1}$.
Since $A\cong\gr_I A$ and $B\cong \gr_L B$ as graded algebras, there
is a graded algebra isomorphism from $A$ to $B$. 
\end{proof}

\begin{proof}[Proof of Theorem \ref{xxthm1}]
By Lemma \ref{xxlem1.1}(1,2), $\dim A_1=\dim B_1=:d<\infty$.
Pick $k$-linear bases of $A_1$ and $B_1$ respectively, 
say, $\{x_1,\ldots ,x_d\}$ and $\{y_1,\ldots,y_d\}$. 
Let $I=A_{\geq 1}$, $J=\phi(I)$, $L=B_{\ge 1}$, and $K=\phi^{-1}(L)$. 
Since $L=(y_1,\ldots,y_d)$ and $B$ is connected graded,
$L/L^2=\bigoplus_{i=1}^d k\overline{y_i}$.
Since $K$ is a codimension 1 ideal of tangent dimension
$d$, by making a linear change of the $x_i$, we may assume
that $K=(x_1,\ldots,x_{d-1}, x_d-\alpha)$, for some
$\alpha\in k$, and $K/K^2=
\bigoplus_{i=1}^{d-1} k\overline{x_i}\bigoplus 
k\overline{x_d-\alpha}$. The algebra isomorphism 
$\phi$ induces an $k$-linear isomorphism 
$$\overline{\phi}: K/K^2\to L/L^2.$$ By  making a 
linear change of the $y_i$, we have that
$$\overline{\phi} (\overline{x_i})=\overline{y_i}, 
\quad {\text{for all $i=1,\ldots,d-1$, \;  and}}
\quad
\overline{\phi} (\overline{x_d-\alpha})=\overline{y_d}.$$
Equivalently, we have that 
$\phi(x_i)=y_i+y_i'$ with $y_i'\in L^2$ 
for $i=1,\ldots ,d-1$ and that $\phi(x_d-\alpha) =
y_d + y_d'$ with $y_d'\in L^2$.

The easy case is when $\alpha=0$, then we have $J\subseteq L$.
Since $J$ has codimension 1, we then see that $J=L$. In fact, 
$\alpha=0$ if and only if $J=L$, if and only if $K=I$.
Since $\phi(I)=L$, the argument in the proof of 
Lemma \ref{xxlem1.2} shows that 
$$A\cong \gr_I(A)\cong \gr_L(B)\cong B.$$

The hard case is when $\alpha$ is nonzero (or equivalently, 
$J\neq L$ or $K\neq I$). 
Recall that $\phi(x_i)=y_i+y_i'$ with $y_i'\in L^2$ 
for $i=1,\ldots ,d-1$ and that 
$\phi(x_d)=\alpha+ y_d + y_d'$ with $y_d'\in L^2$. For 
$i=1,\ldots,d-1$ and $j\geq 1$, let $x_i^{[j]}$ denote 
the homogeneous element $[x_d,[x_d,\ldots,[x_d,x_i]\cdots]]$
where there are $j-1$ copies of $x_d$ appeared in the expression.
For example, $x_i^{[1]}=x_i$
for all $i=1,\ldots,d-1$. Similarly we define $y_{i}^{[j]}$.
Note that $x_i^{[j]}$ has degree $j$ and $\phi(x_i^{[j]})-y_i^{[j]}
\in L^{j+1}$. 

Let $r(x_i^{[j]})$
be a homogeneous relation of degree $m$ in $x_i^{[j]}$ for $1\leq i\leq d-1$,
$j\geq 1$.  
Then applying $\phi$, we see that
$r(y_i^{[j]})=r(y_i^{[j]})-\phi(r(x_i^{[j]}))\in L^{m+1}$ and hence
\begin{equation}
\label{E1.2.1}\tag{E1.2.1}
r(y_i^{[j]})=0
\end{equation}
in $B$.

In general, any homogeneous (noncommutative) polynomial 
$r:=r(x_1,\ldots ,x_d)$ in 
$x_1$, $\ldots$, $x_d$ of degree $m$ can be written as
\begin{equation}
\label{E1.2.2}\tag{E1.2.2}
r=\sum_{s=0}^m x_d^s r_s(x_{i}^{[j]})
\end{equation}
where $r_s(x_{i}^{[j]})$ is homogeneous polynomial of degree $m-s$
in $x_i^{[j]}$ for $1\leq i\leq d-1$ and $j\geq 1$.
Next we will prove the following two statements by induction:
\begin{enumerate}
\item[Claim 1:]
If $r$ is a homogeneous relation of degree $n$ in $x_1,\ldots,x_d$ 
(so $r(x_1,\ldots,x_d)=0$ in $A$), then $r(y_1,\ldots,y_d)=0$ in $B$.
\item[Claim 2:]
$\dim A_n=\dim B_n$.
\end{enumerate}
In fact, we will use induction on $n$ for all such isomorphisms $\phi: A\to B$
with the property that $\phi(I)\neq L$.
There is nothing to prove for $n=0$ and $1$. Now assume that Claim 1 and Claim 2 hold
for all $n<m$. To prove Claim 1 for $n=m$,
suppose that we have a homogeneous relation $r$ in 
$x_1,\ldots ,x_d$ of degree $m$ (so $r(x_1,\ldots,x_d)=0$ in $A$) and we 
have a decomposition $r=\sum_{s=0}^m x_d^s r_s$ as described 
in \eqref{E1.2.2}.  
We need show that $r(y_1,\ldots ,y_d)=0$. 

Let $t$ be the largest integer for which $r_t(x_{i}^{[j]})$ 
in \eqref{E1.2.2} is nonzero in $A$. (Equivalently the degree 
of $r_t(x_{i}^{[j]})$ is the smallest). If $t=0$, then it 
follows that $r(x_1,\ldots ,x_d)$ is a relation in $x_i^{[j]}$
But we have shown that all such relations have the property 
that $r(y_1,\ldots ,y_d)=0$, see \eqref{E1.2.1}, as desired. 
Now we assume that $t>0$. Applying $\phi$ to $r$, we see that 
\begin{equation}
\label{E1.2.3}\tag{E1.2.3}
0=\phi(r(x_1,\ldots ,x_d)) = 
\sum_{s=0}^t (y_d+\alpha+y_d')^s \phi(r_s(x_{i}^{[j]})).
\end{equation}
Then, by construction and \eqref{E1.2.3},
$\alpha^t r_t(y_{i}^{[j]})\in L^{m-t+1}$.  
This then gives by homogeneity that $r_t(y_{i}^{[j]})=0$.  
We have just shown that there is some homogeneous relation $r'$ in 
$y_1,\ldots ,y_d$ of degree $m-t$ such that 
$r'(x_1,\ldots ,x_d)\neq 0$.  Since $A_{m-t}$ and $B_{m-t}$ have the 
same dimension by Claim 2 (for $n=m-t$), it follows that there must exist
a relation $r''$ in $x_1,\ldots ,x_d$ of degree $m-t$ 
such that $r''(y_1,\ldots ,y_d)$ is nonzero.  This contradicts
Claim 1 for $n=m-t$. Therefore Claim 1 holds for $n=m$. 

As a consequence of Claim 1 for $n=m$, $\dim A_{m}\geq \dim B_{m}$. 
Since Claim 2 is independent of the choices of bases
$\{x_i\}_{i=1}^d$ and $\{y_i\}_{i=1}^d$, we obtain
$\dim B_m\geq \dim A_m$ by applying the consequence to 
the map $\phi^{-1}: B\to A$. Therefore Claim 2 holds for
$n=m$. This finishes the induction.

By Claims 1 and 2, since $A$ is graded, the map $\phi:A\to B$ 
defined by $\phi(x_i)=y_i$ gives a graded algebra isomorphism
from $A$ to $B$ as required.  
\end{proof}

Proofs of Corollaries \ref{xxcor2} and \ref{xxcor3} are easy 
and omitted.
Theorem \ref{xxthm1} fails when $A$ and $B$ are not generated
in degree 1.

\begin{example}
\label{xxex1.3} 
Let $A$ be the algebra $k_{-1}[x_1,x_2]\otimes k[y_1,y_2]$
with $\deg x_1=\deg x_2=1$ and $\deg y_1=\deg y_2=2$. 
Let $B$ be the algebra $k_{-1}[x_1,x_2]\otimes k[y_1,y_2]$
with $\deg x_1=\deg x_2=2$ and $\deg y_1=\deg y_2=1$. Then
$A\cong B$ as ungraded algebras, but not as graded algebras.
Further $A$ and $B$ have the same Hilbert series.
\end{example}

\section{Application 1: isomorphism problem}
\label{xxsec2}

In this section we consider the isomorphism problem for 
skew polynomial rings and their graded factor rings, and 
prove Theorem \ref{xxthm0.4}. 

\begin{lemma}
\label{xxlem2.1}
Let $A$ be the skew polynomial ring $k_{p_{ij}}[x_1,\ldots,x_n]$
and let $B=A/I$ where $I$ be a graded ideal of $A$ contained in
$A_{\geq 3}$. Suppose $p_{ij}\neq 1$ for all $i\neq j$. Then 
every normal element in $B_1$ is of the form $c x_i$ for some 
scalar $c$ and for some $1\leq i\leq n$.
\end{lemma}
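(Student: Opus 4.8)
The plan is to reduce the question to a single computation in degree two of the skew polynomial ring $A$, and then to exploit the fine multidegree grading to isolate the relevant coefficients.

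First I would observe that, because $I\subseteq A_{\geq 3}$, the algebras agree in low degrees: $B_0=k$, $B_1=A_1$, and $B_2=A_2$, and the multiplication $B_1\times B_1\to B_2$ is identified with $A_1\times A_1\to A_2$. The crucial point is that normality of a degree-one element is detected entirely in degree two. Indeed, if $z\in B_1$ is normal, then for each generator $x_k$ the product $zx_k$ lies in $Bz$; since $z$ is homogeneous the subspace $Bz$ is graded, and comparing degrees forces $zx_k=w_kz$ for some $w_k\in B_1=A_1$, an identity living in $B_2=A_2$. Thus the ideal $I$, supported in degrees $\geq 3$, plays no role, and it suffices to analyze the equations $zx_k=w_kz$ inside the skew polynomial ring $A$ itself.

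Next I would bring in the $\Z_{\geq 0}^n$-grading by multidegree, for which $x_i$ has multidegree $e_i$ and the defining relations are multihomogeneous. In degree two this refines $A_2$ into the one-dimensional pieces $k\,x_i^2$ (multidegree $2e_i$) and $k\,x_ix_j$ for $i<j$ (multidegree $e_i+e_j$), using $x_jx_i=p_{ij}x_ix_j$. Writing $z=\sum_i c_ix_i$, I want to show its support $S=\{i:c_i\neq 0\}$ has at most one element. Suppose for contradiction that $|S|\geq 2$; let $a=\min S$ and choose $b\in S$ with $b>a$. Applying the above to $k=a$, write $w_a=\sum_j d_jx_j$ and compare the two sides of $zx_a=w_az$ in the relevant one-dimensional components: multidegree $2e_b$ gives $d_bc_b=0$, hence $d_b=0$; multidegree $2e_a$ gives $d_ac_a=c_a$, hence $d_a=1$; and multidegree $e_a+e_b$ gives $d_ac_b=p_{ab}c_b$, hence $d_a=p_{ab}$. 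Combining these forces $p_{ab}=1$, contradicting the standing hypothesis $p_{ij}\neq 1$. Therefore $|S|\leq 1$ and $z=cx_i$ for some $i$, as required.

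The skew-commutation bookkeeping in the last paragraph is routine; the step deserving care is the reduction in the first paragraph, namely the assertion that normality of a degree-one element can be tested purely in degree two, so that the relations of $I$ are irrelevant, together with the observation that the multidegree grading splits each coefficient comparison off into a one-dimensional space. That is precisely where the two hypotheses $I\subseteq A_{\geq 3}$ and $p_{ij}\neq 1$ enter, and it is the heart of the argument.
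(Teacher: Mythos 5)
Your proof is correct, but it takes a more self-contained route than the paper. The paper's proof is two lines: it quotes \cite[Lemma 3.5(d)]{KKZ} for the case $I=0$, and then observes that since $I\subseteq A_{\geq 3}$, an element of degree $1$ is normal in $A$ if and only if its image is normal in $B$. You replace the citation by an explicit computation: after reducing to the identities $zx_k=w_kz$ in $B_2=A_2$ (which is exactly the content of the paper's one-line reduction, spelled out carefully --- normality of a homogeneous degree-one element in an algebra generated in degree one is witnessed entirely by degree-two identities, where $I$ vanishes), you use the $\Z_{\geq 0}^n$-multigrading on $A_2$ to compare coefficients in the one-dimensional components of multidegrees $2e_a$, $2e_b$, and $e_a+e_b$, deriving $d_a=1$ and $d_a=p_{ab}$ simultaneously and hence the contradiction $p_{ab}=1$. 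The bookkeeping checks out (with the paper's convention $x_bx_a=p_{ab}x_ax_b$ for $a<b$, the $e_a+e_b$ component of $zx_a$ is $p_{ab}c_b\,x_ax_b$ and that of $w_az$ is $d_ac_b\,x_ax_b$ once $d_b=0$ is known); note that you never actually need $a=\min S$, since the three multidegree components you examine receive no contributions from other indices in the support. What your approach buys is independence from the external reference; what the paper's buys is brevity. Both arguments hinge on the same two points you correctly identify as the heart of the matter: $I\subseteq A_{\geq 3}$ makes degree two untouched, and $p_{ij}\neq 1$ kills any normal element with support of size at least two.
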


\begin{proof} Let $f\in B_1=A_1$. If $I=0$, the assertion follows
from \cite[Lemma 3.5(d)]{KKZ}. Since $I\subseteq A_{\geq 3}$,
$f$ is normal in $A$ if and only if the image of $f$ is normal in $B=A/I$.
Therefore the assertion follows.
\end{proof}

Now we can prove Theorem \ref{xxthm0.4}.

\begin{proof}[Proof of Theorem \ref{xxthm0.4}] 
By Theorem \ref{xxthm1},
$A$ is isomorphic to $B$ as graded algebras. In particular, 
$n=m$. Let $\phi: B\to A$ be a graded algebra isomorphism. Since $\phi(x_i)$
are normal elements in $A$ of degree 1, by Lemma \ref{xxlem2.1},
$\phi(x_i)=c_i x_{\sigma(i)}$ for some $c_i\in k$ and some permutation 
$\sigma\in S_n$. Up to an elementary change of basis of $A$, $\phi$
sends $x_i$ to $x_i$ for all $i$. The assertions follow.
\end{proof}

\section{Application 2: cancellation problem}
\label{xxsec3}

We have another quick application. Let $Z(A)$ be the 
center of an algebra $A$.

\begin{theorem}
\label{xxthm3.1}
Let $A$ and $B$ be two connected graded algebras finitely generated in degree
1. Suppose that $Z(A)\cap A_1=\{0\}$. If $A[t_1,\ldots,t_n]
\cong B[s_1,\ldots ,s_n]$ as ungraded algebras, then $A\cong B$.
\end{theorem}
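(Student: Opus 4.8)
The plan is to first upgrade the given ungraded isomorphism to a graded one using Theorem \ref{xxthm1}, and then to exploit the hypothesis $Z(A)\cap A_1=\{0\}$ to recognize the adjoined variables \emph{intrinsically} as the degree-one part of the center. Once the variables are identified canonically, the graded isomorphism will automatically carry the ideal generated by the $t_i$ onto the ideal generated by the $s_i$, and the theorem will follow by passing to quotients.

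First I would equip the polynomial extensions with gradings by declaring $\deg t_i=\deg s_i=1$. Since $A$ and $B$ are connected graded and finitely generated in degree $1$, the rings $A[t_1,\ldots,t_n]$ and $B[s_1,\ldots,s_n]$ are again connected graded and generated in degree $1$ (the degree-zero part remains $k$, and the new generators sit in degree $1$). The hypothesis that $A[t_1,\ldots,t_n]\cong B[s_1,\ldots,s_n]$ as ungraded algebras then becomes, by Theorem \ref{xxthm1}, a graded algebra isomorphism $\Phi\colon A[t_1,\ldots,t_n]\to B[s_1,\ldots,s_n]$.

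The key step is the computation of the degree-one part of the center. Each $t_i$ is central in $A[t_1,\ldots,t_n]$, so for $a\in A_1$ and scalars $c_i\in k$ the element $a+\sum_i c_i t_i$ is central if and only if $a$ commutes with all of $A$, i.e.\ $a\in Z(A)\cap A_1$; by hypothesis this forces $a=0$, giving
$$Z(A[t_1,\ldots,t_n])_1=\bigoplus_{i=1}^n k\,t_i,$$
which is $n$-dimensional. The center of a graded algebra is a graded subalgebra, and any algebra isomorphism preserves the center, so the graded map $\Phi$ restricts to a linear isomorphism of degree-one centers. The same reasoning on the other side gives $Z(B[s_1,\ldots,s_n])_1=(Z(B)\cap B_1)\oplus\bigoplus_i k\,s_i$, of dimension $\dim(Z(B)\cap B_1)+n$. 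Comparing dimensions through $\Phi$ forces $\dim(Z(B)\cap B_1)=0$, so $Z(B[s_1,\ldots,s_n])_1=\bigoplus_i k\,s_i$ and $\Phi$ maps $\bigoplus_i k\,t_i$ isomorphically onto $\bigoplus_i k\,s_i$.

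Finally, since $\Phi$ sends the span of the $t_i$ onto the span of the $s_i$, it carries the ideal $(t_1,\ldots,t_n)$ onto $(s_1,\ldots,s_n)$ and therefore descends to an isomorphism of the quotients
$$A=A[t_1,\ldots,t_n]/(t_1,\ldots,t_n)\;\cong\;B[s_1,\ldots,s_n]/(s_1,\ldots,s_n)=B,$$
as desired. I expect the main obstacle to be precisely this center computation: one must check both that the hypothesis $Z(A)\cap A_1=\{0\}$ persists in the polynomial extension and pins down exactly the adjoined variables, and that the matching conclusion on the $B$-side (namely $Z(B)\cap B_1=\{0\}$) can be \emph{forced}, rather than assumed, purely from the dimension count supplied by $\Phi$.
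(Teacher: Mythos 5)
Your proposal is correct and follows essentially the same route as the paper: grade the polynomial extensions with $\deg t_i=\deg s_i=1$, upgrade to a graded isomorphism via Theorem \ref{xxthm1}, identify $\bigoplus_i k\,t_i$ as the degree-one part of the center using $Z(A)\cap A_1=\{0\}$, match it with $\bigoplus_i k\,s_i$ by a dimension count, and pass to quotients. Your extra observation that the dimension count \emph{forces} $Z(B)\cap B_1=\{0\}$ is a slight refinement of the paper's argument (which only needs $\phi^{-1}(s_j)\in\bigoplus_i k\,t_i$), but it is the same idea.
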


\begin{proof} If we set $\deg t_i=\deg s_i=1$ for all $i$, both
$C:=A[t_1,\ldots,t_n]$ and $D:=B[s_1,\ldots ,s_n]$ are connected graded 
and finitely generated in degree 1. Since $C\cong D$, by Theorem 
\ref{xxthm1}, there is a graded algebra isomorphism 
$\phi: C\cong D$. Since 
$Z(A)\cap A_1=\{0\}$, $Z(C)\cap C_1=\oplus_{i=1}^n k t_i$. By definition,
the $s_j$ are in the center of $D$.
Therefore $\phi^{-1}(s_j)\in \oplus_{i=1}^n k t_i$ for all $j$. By a dimension
argument, $\phi^{-1}(\oplus_{i=1}^n ks_i)= \oplus_{i=1}^n k t_i$.
Modulo $s_i$ and $t_i$ we obtain an induced algebra isomorphism
$\overline{\phi}: A\cong C/(t_i)\cong D/(s_i)\cong B$.
\end{proof}

Recall that an algebra $A$ is {\it cancellative}, if, for any algebra
$B$, an algebra isomorphism $A[t]\cong B[s]$ implies that $A
\cong B$. Theorem \ref{xxthm3.1} provides a weaker version 
of cancellation by assuming that both $A$ and $B$ are connected
graded finitely generated in degree 1. It would be interesting 
if one can improve Theorem \ref{xxthm3.1} by removing the 
hypothesis on $B$. 

\section*{Acknowledgments} 
J. Bell was supported by NSERC Discovery Grant 326532-2011.
J.J. Zhang was supported by the US National Science 
Foundation (grant Nos. DMS 0855743 and DMS 1402863).

\providecommand{\bysame}{\leavevmode\hbox to3em{\hrulefill}\thinspace}
\providecommand{\MR}{\relax\ifhmode\unskip\space\fi MR }
\providecommand{\MRhref}[2]{%
\href{http://www.ams.org/mathscinet-getitem?mr=#1}{#2} }
\providecommand{\href}[2]{#2}

\end{document}